\newcommand{\bburl}[1]{\textcolor{blue}{\url{#1}}}
\newcommand{\monthyear}[1]{%
  \def\@monthyear{\uppercase{#1}}}
\newcommand{\volnumber}[1]{%
  \def\@volnumber{\uppercase{#1}}}
\theoremstyle{plain}
\numberwithin{equation}{section} 
\newtheorem{thm}{Theorem}[section]
\newtheorem{theorem}[thm]{Theorem}
\newtheorem{lemma}[thm]{Lemma}
\numberwithin{table}{section} 
\numberwithin{figure}{section}
\begin{document}

\monthyear{Month Year}
\volnumber{Volume, Number}
\setcounter{page}{1}

\title{Average Degree of Graphs Derived From The Ammann A2 Aperiodic Tiling}

\author{
\name{Darren C. Ong\textsuperscript{a,b} and Xinyan Xu\textsuperscript{a}\thanks{CONTACT Xinyan Xu Email: xinyan.x\_84@qq.com, Darren C. Ong Email: darrenong@xmu.edu.my }}
\affil{\textsuperscript{a}Department of Mathematics, 
                        Xiamen University Malaysia, 
                        Jalan Sunsuria, Bandar Sunsuria, 
                        43900 Sepang, Selangor, Malaysia;
        \textsuperscript{b}School of Mathematical Sciences, 
                        Xiamen University, Xiamen 361005, 
                        Fujian, China }
}

\maketitle

\begin{abstract}
The Ammann A2 tiling is a simple aperiodically ordered tiling of the plane. 
We consider the graph derived from this tiling, by treating each corner of each tile as a vertex and each side of each tile as an edge. We  present a closed-form formula for the average degree of the graph corresponding to this Ammann A2 tiling.
\end{abstract}

\begin{keywords}
	Aperiodic tilings;Average degree
\end{keywords}

\section{Introduction}

A plane tiling is a countable family of closed sets which cover the plane without gaps or overlaps\cite{grunbaum1987tilings}.

We say that a square tiling of $\mathbb R^2$ is periodic, since it is invariant under a nontrivial translation: translating every tile up, down, left or right by a distance equal to the square length will result in the exact same tiling. An aperiodic tiling is a tiling of the plane that is not translation invariant in this way. The most famous example is probably the Penrose tilings, in particular the Kite-Dart and the Rhombus Penrose tilings which each tile the plane aperiodically using only two tile shapes \cite{1974The} \cite{penrose1979pentaplexity}.

In 1992, Robert Ammann, Branko Gr{\"u}nbaum and Geoffrey C Shephard identified a total of four aperiodic tilings: A2, A3, A4, A5. In this paper, we discuss only the Ammann A2 tiling (also known as the Ammann chair), which consists of two hexagonal tiles\cite{ammann1992aperiodic}.

These aperiodic substitution tilings are important as two-dimensional models of quasicrystals. See \cite{BaakeGrimm2013} and \cite{BaakeGrimm2017} for a more detailed exposition of these tilings from this mathematical crystallography perspective. Also, see the Tilings Encyclopedia website \cite{TilingsEncyclopedia} for a compilation of different types of aperiodic tilings.

In this paper we will consider a graph corresponding to the Ammann A2 tiling. That is, given a tiling of a plane or a subset of a plane, the graph corresponding to it arises when we treat each corner of every tile as a vertex of the graph, and each side of the tile as an edge.

These graphs corresponding to the Penrose tilings and other aperiodic subtitution tilings have proved useful in physics and engineering: see \cite{flicker2020classical}, \cite{lloyd2022statistical},  \cite{ma2022quasiperiodic}, \cite{didari2023penrose}, and \cite{koga2020superlattice}. There have also been mathematical explorations of these types of graphs, for example \cite{de2017random} which studies random walks on a graph corresponding to an aperiodic tiling,  and  \cite{SinghLloydFlicker2024} which studies Hamiltonian cycles on graphs corresponding to Ammann-Beenker tilings.

Nevertheless, very little is known about these graphs. This paper aims to fill this gap in the literature by exploring methods to calculate their average degree. We will introduce an explicit formula for calculating the average degree of the graph corresponding to the A2 tiling. 

There has been some work about analytically calculating vertex frqeuencies of aperiodic tilings (\cite{henley},\cite{jaric}, \cite{peyriere} which can be straightforwardly applied to calculating average degree of the corresponding graphs. But to our knowledge, ours is the first formula for the average degree of the graph corresponding to the A2 tiling specifically.

~\\
\textbf{Acknowledgements.} This paper is an expanded version of a thesis by one of the authors \cite{XuXinyan2024}. D.~C.~O.\ was supported in part by a grant from the Fundamental Research Grant Scheme from the Malaysian Ministry of Education (grant number FRGS/1/2022/TK07/XMU/01/1), a grant from the National Natural Science Foundation of China  (grant number 12201524), and a Xiamen University Malaysia Research Fund (grant number XMUMRF/2023C11/IMAT/0024). We thank one of the anonymous reviewers for many helpful comments.
\section{Aperiodic tilings of the plane}

\subsection{Ammann A2}
The Ammann A2 tiling was first discovered by Robert Ammann in 1977.  \cite{grunbaum1987tilings}. This tiling consists of two right-angled irregular hexagon tiles as shown in Figure \ref{A2tiles}. The two tiles are of the same shape but different sizes. All the angles of the hexagons are right angles. Let $\phi=(1+\sqrt{5})/2$ be the golden ratio, and let $\Psi=1/\sqrt{\phi}$ be the reciprocal of the square root of the golden ratio. If we set the longest side of the big hexagon as length $1$, the sides of the big hexagon (listed clockwise starting with the longest side) are of length $1, \Psi, \Psi^2, \Psi^5,\Psi^4, \Psi^3$. The small hexagon is a $\Psi$-scaled version of the large hexagon. 

Notice that, since $\Psi$ is the reciprocal of the square root of the golden ratio
\begin{equation}
    {\Psi}^4 + {\Psi}^2 = 1
\end{equation}
Which means,
\begin{equation}
    {\Psi}^{n} / {\Psi}^{n+2} = \dfrac{\sqrt{5}+1}{2} \text{ and } {\Psi}^{n+4} + {\Psi}^{n+2} = {\Psi}^n.
\end{equation}

\begin{figure}[H]
\centering
\begin{subfigure}{0.48\linewidth}
  \centering
  \includegraphics[scale=0.1]{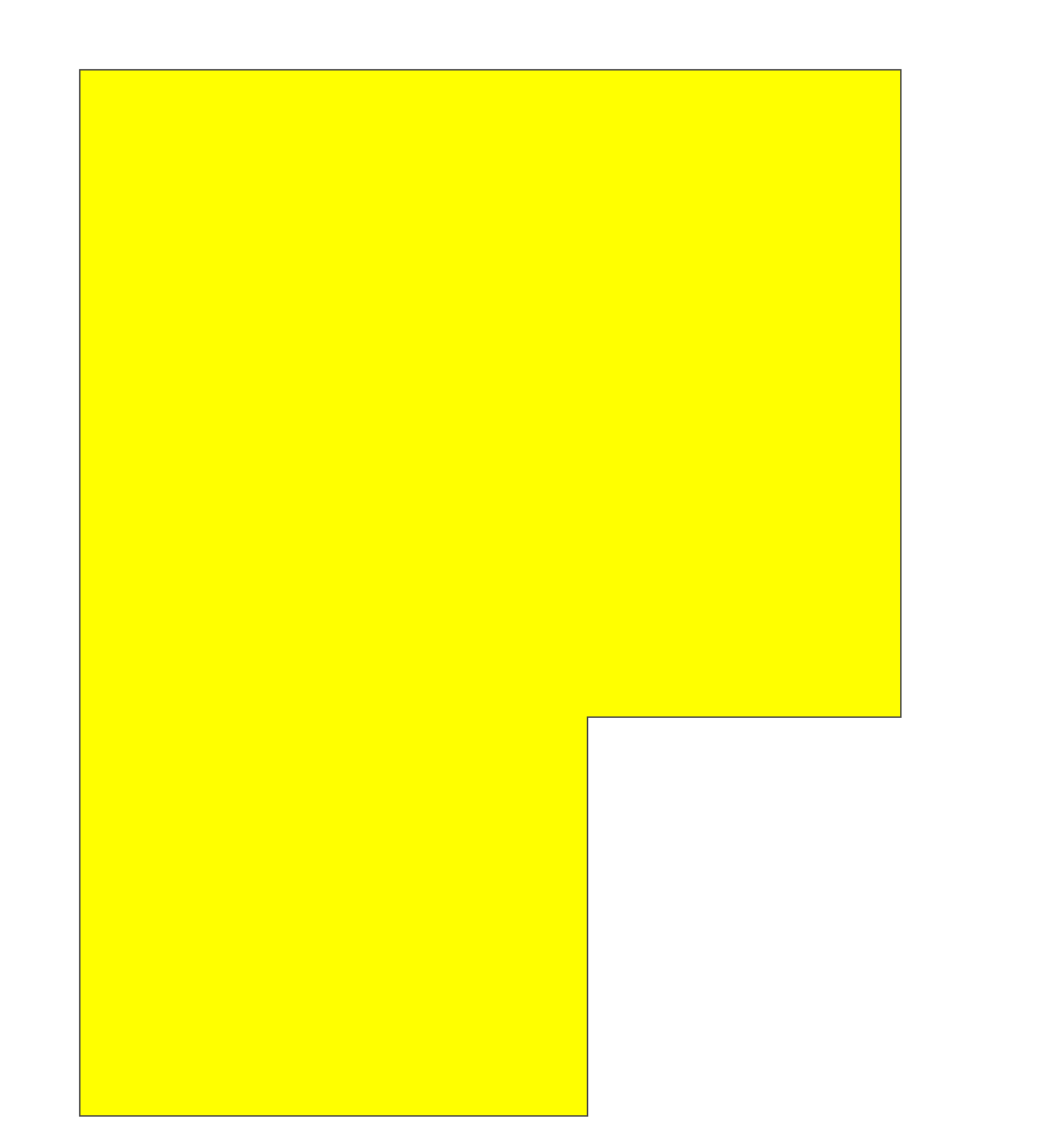}
  \caption{Small Hexagon}    
\end{subfigure}
\hfill
\begin{subfigure}{0.48\linewidth}
  \centering
  \includegraphics[scale=0.27]{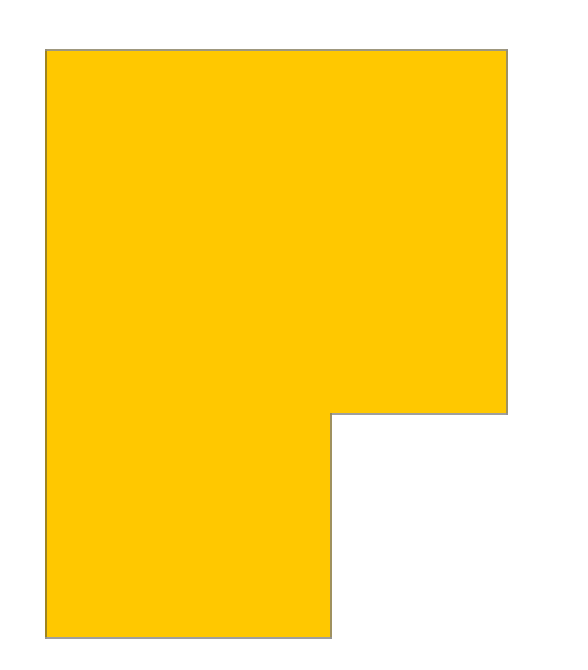}
  \caption{Big Hexagon}    
\end{subfigure}
\caption{Ammann A2 tiles}
\label{A2tiles}
\end{figure}

The substitution rule for the Ammann A2 tiling proceeds as follows. We start with a small hexagon tile. Then, in each step of the substitution process,  every old small hexagon tile gets replaced by a new large hexagon tile, and every old large hexagon tile gets replaced by a new large hexagon tile and a new hexagon tile in an arrangement shown in  Figure \ref{Golden Bee} below. The new substituted tiles are scaled by $\Psi$ compared to the old original tiles. This substitution rule is visualized in Figure \ref{subrule}.  All the lengths of the sides of the tiles in every substitution step can be written as a constant times the power of $\Psi$ \cite{durand2020structure} (in this paper we choose the constant to be $1$) .

\begin{figure}[H]
    \centering
    \includegraphics[scale = 0.13]{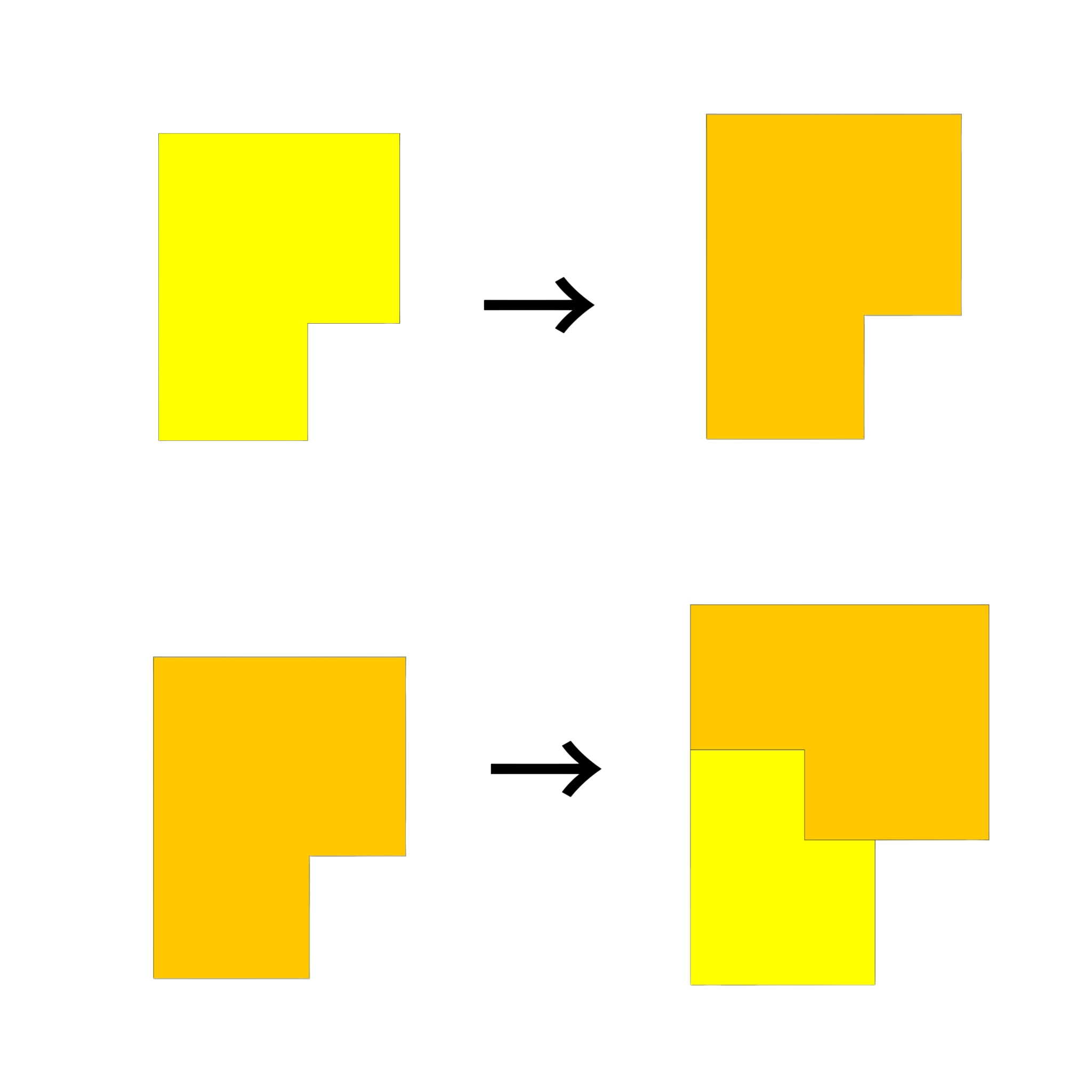}
    \caption{The substitution rule for the Ammann A2 tiling}
    \label{subrule}
\end{figure}

\begin{figure}[H]
    \centering
    \includegraphics[scale = 0.23]{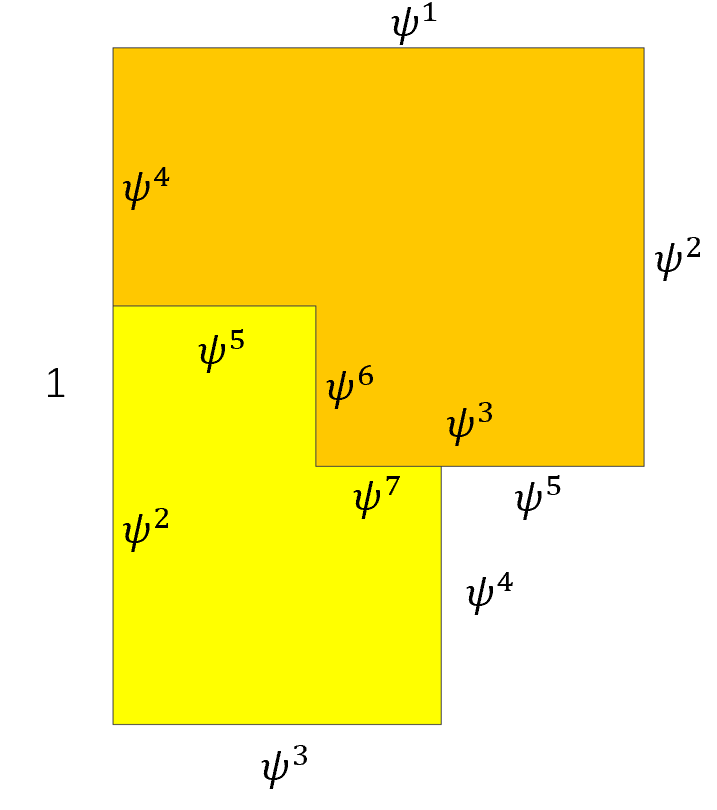}
    \caption{A single large hexagon tile gets replaced by a large hexagon tile and a small hexagon tile, scaled by $\Psi$}
    \label{Golden Bee}
\end{figure}

Let us call the small hexagon tile "generation 1", and let us call the tile arrangement we obtain after applying the substitution rule $n$ times to a single small hexagon tile "generation $n+1$"  of the Ammann A2 tiling. Then we notice that each new generation of the Ammann A2 tiling is composed of the previous two generations, mimicking the Fibonacci recursion.For instance, in Figure \ref{345} the following three generations are illustrated: generation 3, generation 4 and generation 5. Specifically, generation 5 is composed of generation 4 rotated 90 degrees clockwise and generation 3 flipped vertically.

\begin{figure}[H]
 \centering
  \begin{subfigure}{0.32\textwidth}
 \centering
 \includegraphics[scale=0.15]{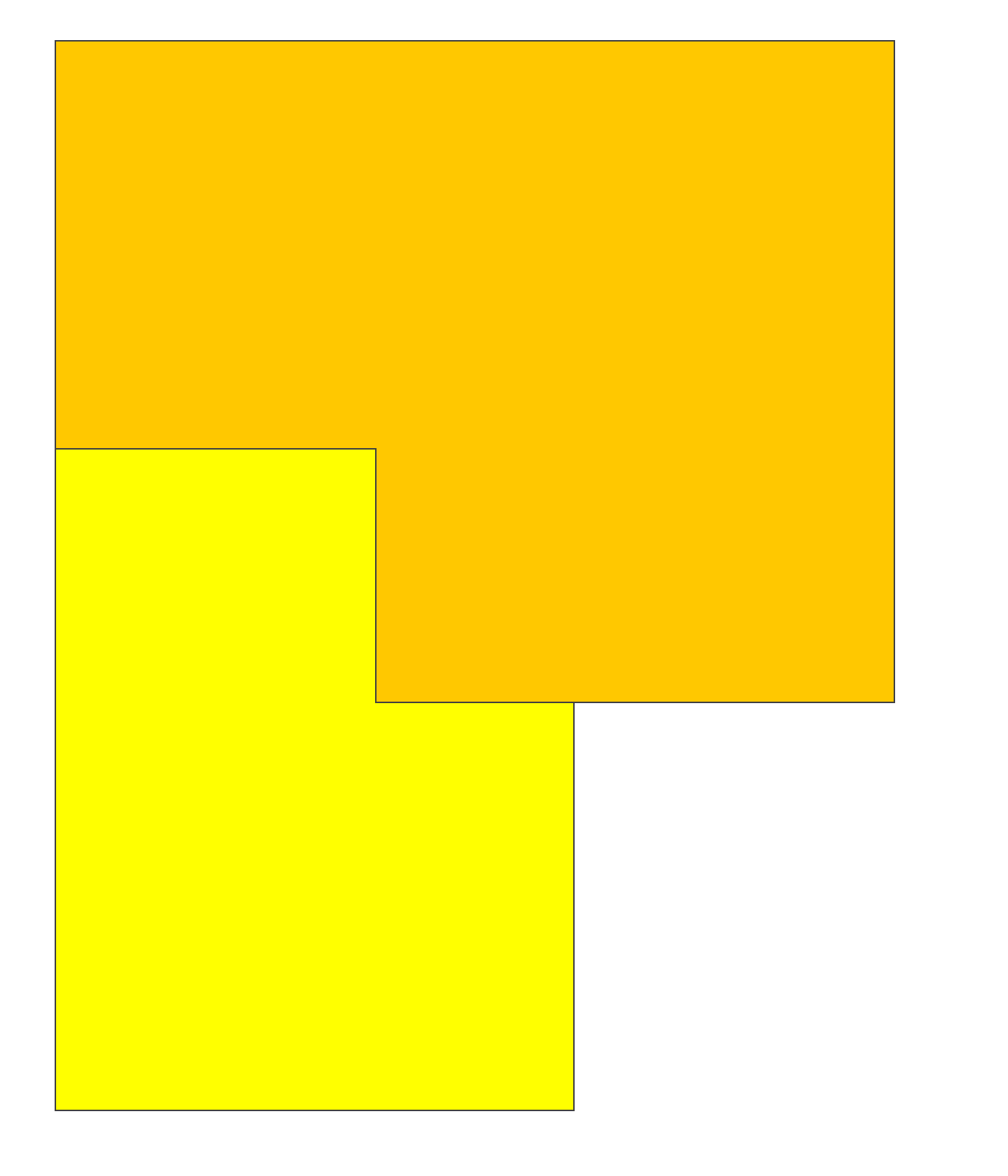}
 \caption{Generation 3}    
 \end{subfigure}
 \begin{subfigure}{0.32\textwidth}
 \centering
 \includegraphics[scale=0.15]{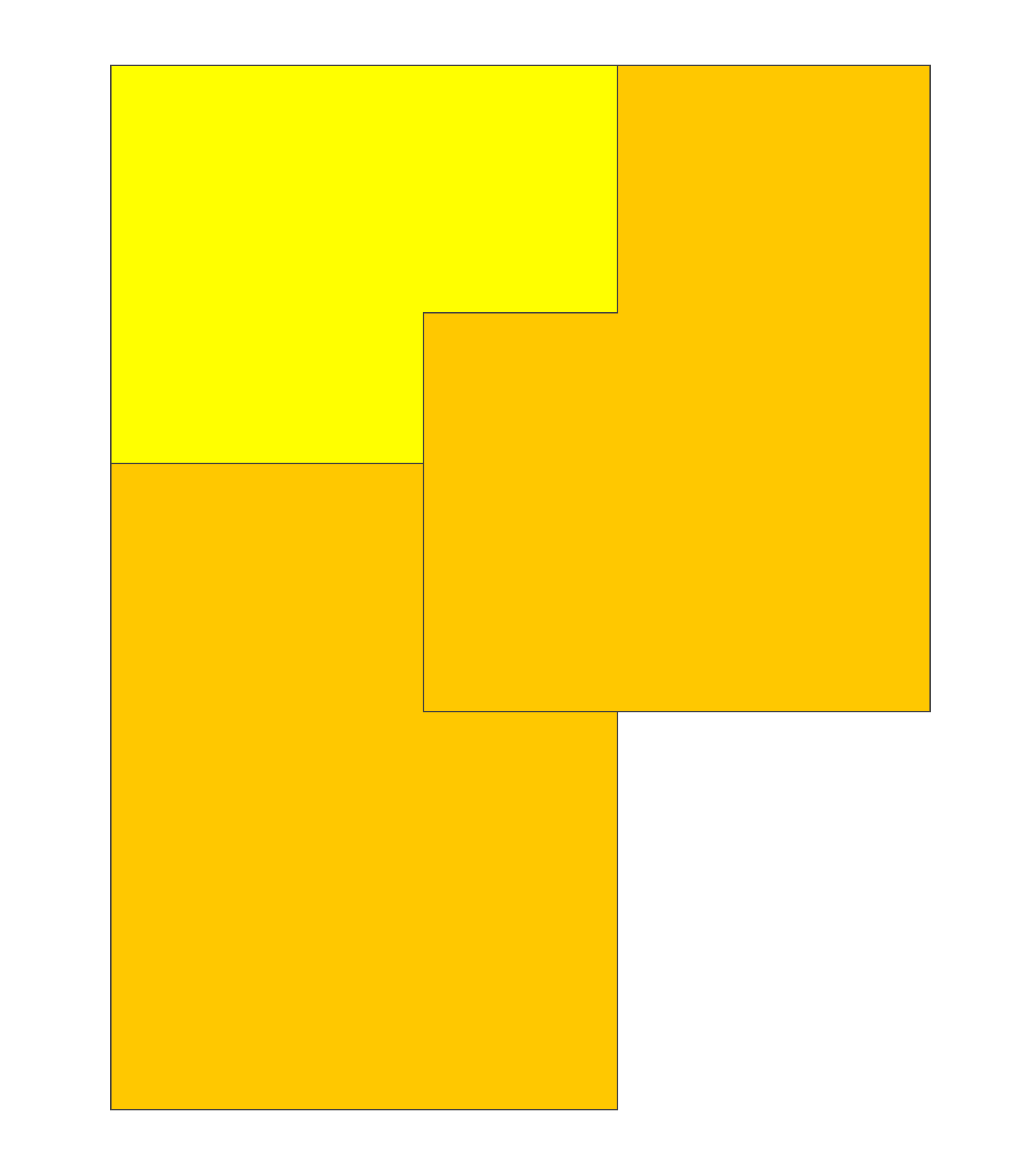}
 \caption{Generation 4}    
 \end{subfigure}
\centering
 \begin{subfigure}{0.32\textwidth}
 \centering
 \includegraphics[scale=0.15]{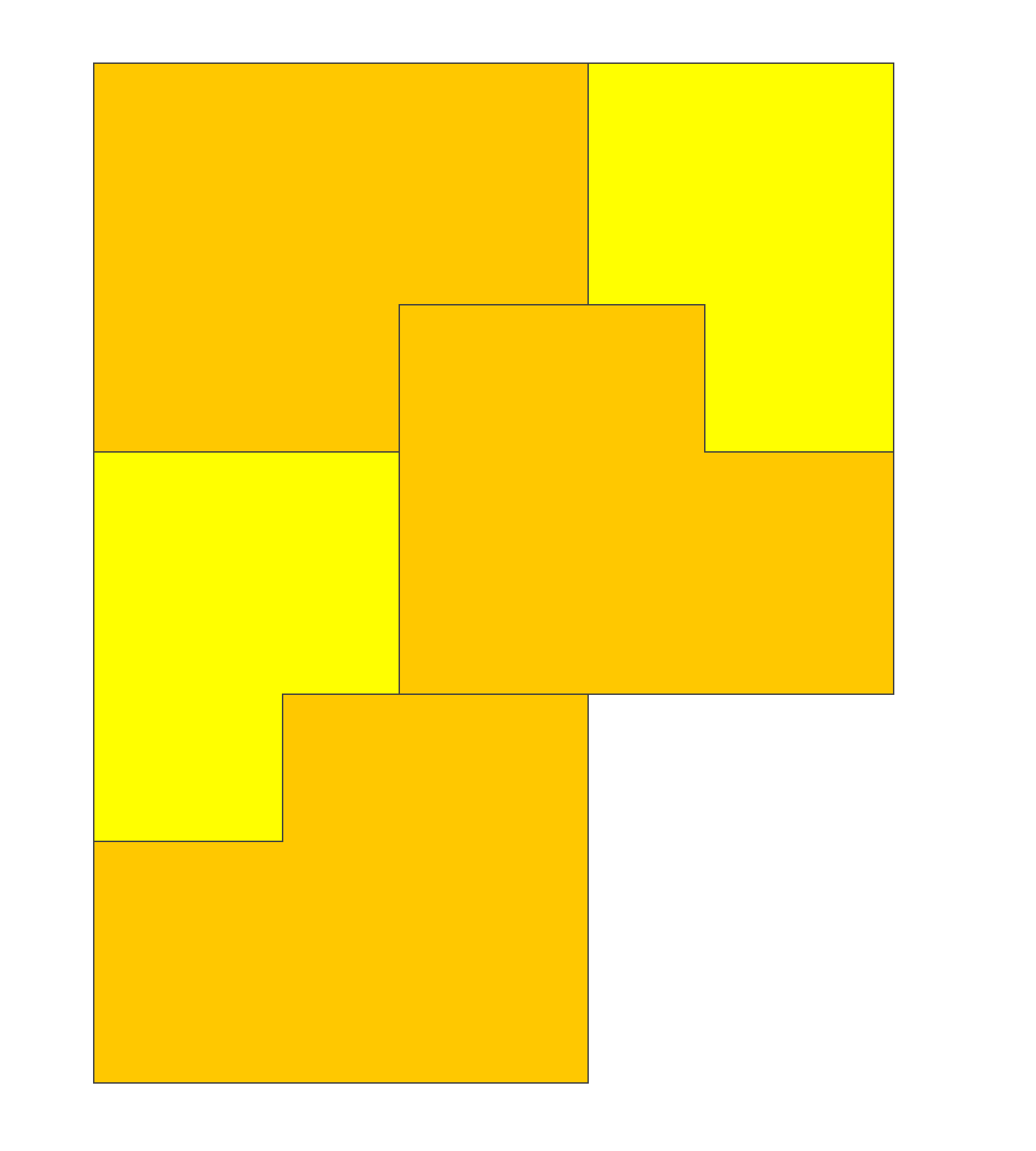}
 \caption{Generation 5}    
 \end{subfigure}
\caption{Generation 5 is a combination of the generation 4 and generation 3 tiles.}
\label{345}
\end{figure}

For every $n$, it is easy to show that Generation $n+2$ is composed by combining the tiles from Generation $n+1$ and Generation $n$ in a similar way.

\begin{figure}[H]
    \centering
    \includegraphics[scale = 0.4]{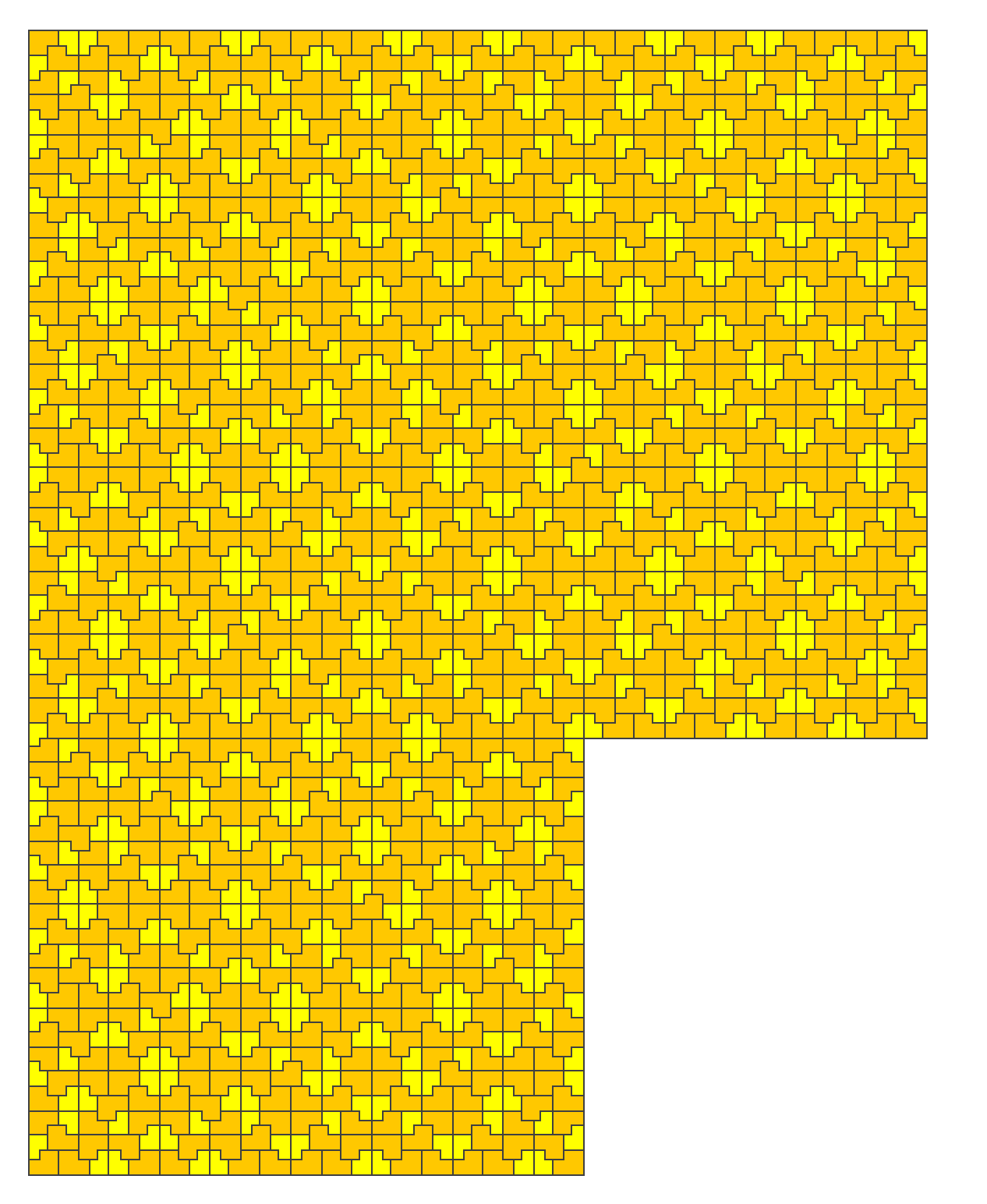}
    \caption{The Ammann A2 tiling after several generations}
\end{figure}

\section{Average degree formula for A2 tiling graph}
Given a tiling of a subset of the plane, we may obtain a graph by treating every corner of a tile as a vertex, and every straight line boundary of a tile as an edge. We will use the term ``point" interchangably with ``vertex".
 The degree of a point is equal to the number of edges connecting to the point. Each edge connects two vertices, so each edge adds contributes two degrees to the sum of all the degrees of all the vertices. The average degree of a graph can then be calculated as follows. \cite{west2001introduction}
\begin{equation}
    \begin{split}
        \text{Average degree of a graph} 
            &= \frac{\text{Sum of Degrees of all Vertices}}{\text{Number of Vertices}} \\[2ex]
            &= \frac{2 \times \text{ Number of Edges}}{\text{ Number of Vertices}}
    \end{split}
\end{equation}
We will provide a closed form formula for the average degree of every generation of the Ammann A2 tiling. 

\subsection{Recursion Formula}
For the Ammann A2 tiling, we can determine the average degree of the limiting graph explicitly.
Let us first define F$\left(n\right)$ as the $n$th Fibonacci number (indexed so $F(1)=1, F(2)=1, F(3)=2$). We will henceforth refer to generation $n$ of the A2 tiling as the A2-$n$ tiling. Refer to Figures \ref{A2-1}, \ref{A2-2}, and \ref{A2-3} for pictures of A2-1, A2-2 and A2-3. We then have the following theorem for the average degree of the graph corresponding to the A2-$k$ tiling.

\begin{theorem}\label{A2theorem}
Consider the graph corresponding to the A2-$k$ tiling by treating every corner of every tile as a vertex, and every side of every tile as an edge. Let $V(k)$ be the number of vertices in that graph, and let $T(k)$ be the sum of the degrees of all the vertices. Let $F(n)$ be the $n$th Fibonacci number. Then for $k\geq 7$,

\begin{equation}\label{Vformula}
V(k)=\begin{cases}
1+\frac{F\left(\frac{k}{2}\right)}{2}+F\left(\frac{k}{2}+1\right) +2F\left(\frac{k}{2}-1\right)-\frac{F\left(\frac{k}{2}-3\right)}{2}\\+\left(\frac{31}{2}+\frac{69}{2\sqrt 5}\right)\left(\frac{1+\sqrt 5}{2}\right)^{k-7}+\left(\frac{31}{2}-\frac{69}{2\sqrt 5}\right)\left(\frac{1-\sqrt 5}{2}\right)^{k-7}, & \text{ $k$ even}\\
1+2F\left(\frac{k+1}{2}\right)+F\left(\frac{k-1}{2}\right)\\+\left(\frac{31}{2}+\frac{69}{2\sqrt 5}\right)\left(\frac{1+\sqrt 5}{2}\right)^{k-7}+\left(\frac{31}{2}-\frac{69}{2\sqrt 5}\right)\left(\frac{1-\sqrt 5}{2}\right)^{k-7}, & \text{ $k$ odd}
\end{cases}
\end{equation}

and

\begin{equation}\label{Tformula}
T(k)=\begin{cases}
F\left(\frac{k}{2}\right)+2F\left(\frac{k}{2}+1\right) +4F\left(\frac{k}{2}-1\right)-F\left(\frac{k}{2}-3\right)\\+\left(44+\frac{98}{\sqrt 5}\right)\left(\frac{1+\sqrt 5}{2}\right)^{k-7}+\left(44-\frac{98}{\sqrt 5}\right)\left(\frac{1-\sqrt 5}{2}\right)^{k-7}, & \text{ $k$ even}\\
4F\left(\frac{k+1}{2}\right)+2F\left(\frac{k-1}{2}\right)\\+\left(44+\frac{98}{\sqrt 5}\right)\left(\frac{1+\sqrt 5}{2}\right)^{k-7}+\left(44-\frac{98}{\sqrt 5}\right)\left(\frac{1-\sqrt 5}{2}\right)^{k-7}, & \text{ $k$ odd}
\end{cases}
\end{equation}
The average degree of the graph corresponding to A2-$k$ is then given by $T(k)/V(k)$, and the average degree of the limiting graph is given by

\begin{equation}
\lim_{k\to\infty} \frac{T(k)}{V(k)}= \frac{\left(44+\frac{98}{\sqrt 5}\right)}{\left(\frac{31}{2}+\frac{69}{2\sqrt 5}\right)}=\frac{2}{11}\left(14+\phi\right)\approx 2.839642543409072\ldots.
\end{equation}
    
\end{theorem}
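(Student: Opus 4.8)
The plan is to exploit the self-similar decomposition recorded in the excerpt: the A2-$k$ tiling is the union of a rigidly transformed copy of the A2-$(k-1)$ tiling and a copy of the A2-$(k-2)$ tiling, joined along a shared polygonal curve. Writing $E(k)$ for the number of edges of the A2-$k$ graph, this decomposition gives recursions of the form
\[
V(k)=V(k-1)+V(k-2)-c_V(k),\qquad E(k)=E(k-1)+E(k-2)-c_E(k),
\]
where $c_V(k)$ and $c_E(k)$ are boundary corrections recording the vertices and edges shared by the two pieces along the glue curve, together with any edge subdivisions that the gluing forces. Since the handshake lemma gives $T(k)=2E(k)$ for the final graph irrespective of how the boundary is resolved, I would carry the analysis through for $V(k)$ and $E(k)$ and only convert to $T(k)=2E(k)$ at the very end.

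The first real task is to pin down the combinatorics of the glue curve. Each substitution multiplies linear dimensions by $1/\Psi$, hence multiplies the tile count---and therefore $V$ and $E$---by $\Psi^{2}=\Phi$, so the leading growth is governed by the golden ratio and is of order $\Phi^{k}$. The glue curve, being one-dimensional, grows only like $\Psi^{k}=\Phi^{k/2}$; consequently the corrections $c_V(k)$ and $c_E(k)$ satisfy their own Fibonacci-type recurrences indexed by roughly $k/2$ and grow like $\Phi^{k/2}$. Determining these corrections exactly, while carefully tracking the points where a long side of one tile is met by the corners of several smaller neighbours (the ``T-vertices'' forced by the irregular right-angled hexagons), is the step I expect to be the main obstacle; it is precisely here that the parity-dependent terms $F(k/2)$, $F\!\left(\tfrac{k\pm1}{2}\right)$ in the stated formulas are generated.

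Once the corrections are in hand, the relations for $V(k)$ and $E(k)$ are inhomogeneous linear recurrences whose forcing term is of size $\Phi^{k/2}$, and I would solve them by superposition. The homogeneous equation $a_k=a_{k-1}+a_{k-2}$ contributes a combination of $\left(\tfrac{1+\sqrt5}{2}\right)^{k}$ and $\left(\tfrac{1-\sqrt5}{2}\right)^{k}$; after re-indexing to the exponent $k-7$ and matching, this produces the terms $\left(\tfrac{31}{2}\pm\tfrac{69}{2\sqrt5}\right)\left(\tfrac{1\pm\sqrt5}{2}\right)^{k-7}$ for $V$, and the analogous $44\pm\tfrac{98}{\sqrt5}$ pair for $T$. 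A particular solution absorbs the slower $\Phi^{k/2}$ forcing and yields the explicit Fibonacci combinations $F(k/2),F(k/2+1),\ldots$ appearing in \eqref{Vformula} and \eqref{Tformula}. The remaining free constants are fixed by computing $V$ and $E$ directly for two consecutive base generations, and the hypothesis $k\ge 7$ merely discards the early generations whose decomposition has not yet settled into this steady pattern.

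The limit then follows immediately. In both $V(k)$ and $T(k)$ the dominant contribution is the $\left(\tfrac{1+\sqrt5}{2}\right)^{k-7}$ term, since $\bigl|\tfrac{1-\sqrt5}{2}\bigr|<1$ annihilates the conjugate pieces and the Fibonacci-in-$k/2$ terms grow only like $\Phi^{k/2}=o\!\left(\Phi^{k}\right)$. Dividing $T(k)$ by $V(k)$ and letting $k\to\infty$ therefore leaves only the quotient of leading coefficients,
\[
\lim_{k\to\infty}\frac{T(k)}{V(k)}=\frac{44+\tfrac{98}{\sqrt5}}{\tfrac{31}{2}+\tfrac{69}{2\sqrt5}},
\]
which is the asserted value.
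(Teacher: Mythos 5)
Your overall strategy coincides with the paper's: decompose A2-$k$ into a transformed copy of A2-$(k-1)$ and a copy of A2-$(k-2)$ glued along an intersection line, write inhomogeneous Fibonacci recurrences for $V$ and for the total degree with boundary corrections of order $\Phi^{k/2}$, solve by superposition of a particular and a complementary solution, fix the free constants from two consecutive base generations, and read off the limit from the ratio of leading coefficients. Your scaling analysis (bulk quantities growing like $\Phi^{k}$, glue-curve quantities like $\Phi^{k/2}$) is correct and explains why the parity-dependent Fibonacci terms appear alongside the $\left(\tfrac{1\pm\sqrt5}{2}\right)^{k-7}$ terms.

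The proposal nevertheless has a genuine gap, and you flag it yourself: the boundary corrections $c_V(k)$ and $c_E(k)$ are never determined, and without them nothing downstream can actually be executed --- the recurrences cannot be written down, the particular solutions cannot be exhibited, and the coefficients $\tfrac{31}{2}\pm\tfrac{69}{2\sqrt5}$ and $44\pm\tfrac{98}{\sqrt5}$ cannot be derived or checked. That computation is the entire substance of the paper's argument and occupies all three of its lemmas. Concretely, the paper shows that the number $D_k$ of interior vertices on the intersection line splits into two interleaved Fibonacci sequences; this is proved by labeling each tile side with the number of substitution steps needed before a new vertex appears on it and tracking how those labels evolve under substitution, giving the closed form of Lemma \ref{DkFormula} and hence the exact vertex correction $2+D_{k-4}$ in the recursion for $V(k+2)$. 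The second essential ingredient is Lemma \ref{reflection} (tiles meeting the intersection line occur in mirror-image pairs), which forces every middle vertex of the glue curve to have degree exactly $4$ and the two end vertices to have degree $3$ after pasting; this is what converts the vertex count into the exact degree loss $2+2D_{k-4}$. Your reformulation via the handshake lemma $T=2E$ does not sidestep this difficulty: you would instead have to count precisely how many edges are identified or subdivided along the glue curve, which is the same combinatorial problem in different clothing. Until those corrections are computed and the base values $V(7)=40$, $V(8)=61$, $T(7)=104$, $T(8)=162$ are supplied, what you have is a correct and well-organized outline rather than a proof.
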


For the sake of completeness, we list $V(k)$ and $T(k)$ for $k=1,2,\ldots,6$ as well. These graphs are small enough that $V(k)$ and $T(k)$ can be counted by hand.

\begin{table}[H]
    \centering
    \begin{tabular}{|c|c|c|}
       \hline
        Generation, A2-$k$ & Total Number of Vertices, $V(k)$ & 2 $\times$ \text{Total Number of Edges, $T(k)$} \\
           \hline
           A2-1  & 6 & 2 $\times$ 6 \\
           \hline
           A2-2  & 6 & 2 $\times$ 6 \\
           \hline
           A2-3  & 9 & 2 $\times$ 10 \\
           \hline
           A2-4  & 12 & 2 $\times$ 14 \\
           \hline
           A2-5  & 18 & 2 $\times$ 22 \\
           \hline
           A2-6  & 26 & 2 $\times$ 33 \\
           \hline
    \end{tabular}
    \caption{Data for A2-1 through A2-6}
    \label{tab:my_label}
    
\end{table}
Before we proceed with the proof of the theorem, let us make some observations about the A2-$k$ graphs.

As previously mentioned, the Ammann A2 tiling follows this rule: the generation $n$ tiling is obtained by combining the $\left(n-1\right)$ th generation and the $\left(n-2\right)$th generation. Therefore, the total number of vertices and the sum of the degrees of all vertices of the generation $n$ graph are obtained by taking a sum from the graphs corresponding to the previous two generations, adjusted for the changes at the intersection line where the graphs of the previous two generations combine.

If we are focus on the total number of vertices and the sum of the degrees of all vertices for the previous two generations, as well as the changes in the degrees and the number of vertices when combining the previous two generations, we can also calculate the sum of the degrees of all vertices and total number of vertices for the $n$th generation. Therefore, our task is to identify the changes that occur when combining the previous two graphs. The only change occurs at the common boundary of the two previous generations. 

 Let us begin with the A2-1 and A2-2 tilings, each consisting of 6 points and characterized by a total of 12 degrees. To obtain A2-3, we combine the A2-1 and A2-2 tilings. There are four vertices in the intersection line of the A2-1 and A2-2 tilings. We then compare the number of vertices and degrees at those four positions after the two parts are combined, compared to before the two parts are combined. 

\begin{figure}[H]
\centering
 \begin{subfigure}{0.3\textwidth}
 \centering
 \includegraphics[scale=0.18]{S.png}
 \caption{A2-1}  
 \label{A2-1}
 \end{subfigure}
\centering
 \begin{subfigure}{0.3\textwidth}
 \centering
 \includegraphics[scale=0.45]{B.png}
 \caption{A2-2}    
 \label{A2-2}
 \end{subfigure}
 \centering
 \begin{subfigure}{0.3\textwidth}
 \centering
 \includegraphics[scale=0.48]{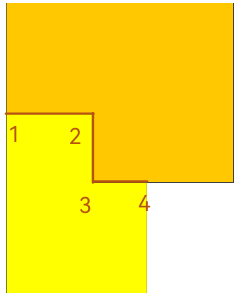}
 \caption{A2-3}   
 \label{A2-3}
 \end{subfigure}
 \end{figure}

 \begin{table}[H]
    \centering
    \begin{tabular}{|c|c|c|c|c|}
    \hline
        \multirow{2}{*}{Generation} & \multicolumn{2}{c|}{Decrease in} & \multirow{2}{*}{Point No.} & \multirow{2}{*}{Amount} \\
        \cline{2-3}
           & Vertices & Degree &   &   \\
        \hline
         \multirow{3}{*}{A2-3} & 1 & 1 & 1 & 1 \\
        \cline{2-5}
         & 1 & 2 & 2 , 3 & 2 \\
         \cline{2-5}
          & 0 & -1 & 4 & 1 \\
        \hline
    \end{tabular}
\end{table}
A2-3 is formed by combining A2-2 tiling rotated 90 degrees and A2-1 tiling flipped vertically. The red intersection line in A2-3 illustrates the changes at each point positions relative to the separated states of A2-1 and A2-2:
\begin{itemize}
    \item At the No.1 point position, there is a decrease of 1 point and 1 degree compared to when the two parts are separated.
    \item At the No.2 and No.3 positions, there is a decrease of 1 point and 2 degrees of each.
    \item At the No.4 position, there is no point reduction because the A2-2 tiling does not contribute a vertex at this position, hence it starts at 0 degrees. The vertex in the A2-1 tiling at the No.4 position, which initially has 2 degrees, increases to 3 degrees after the connection, resulting in a 1 degree increase at this position.
\end{itemize}

While our table indicates the number of points and degree decreases, it uses -1 to signify an actual increase of 1 degree, as exemplified in the No.4 position.

\begin{figure}[H]
\centering
 \begin{subfigure}{0.3\textwidth}
 \centering
 \includegraphics[scale=0.45]{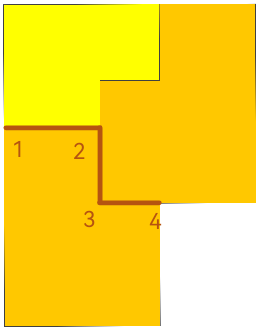}
 \caption{A2-4}  
 \label{A2-4}
 \end{subfigure}
\centering
 \begin{subfigure}{0.3\textwidth}
 \centering
 \includegraphics[scale=0.45]{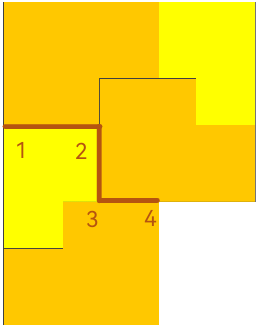}
 \caption{A2-5}    
 \label{A2-5}
 \end{subfigure}
 \centering
 \begin{subfigure}{0.3\textwidth}
 \centering
 \includegraphics[scale=0.43]{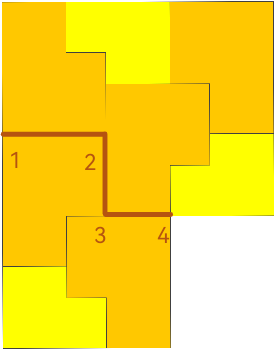}
 \caption{A2-6}   
 \label{A2-6}
 \end{subfigure}
\end{figure}

 \begin{table}[H]
    \centering
    \begin{tabular}{|c|c|c|c|c|}
    \hline
        \multirow{2}{*}{Generation} & \multicolumn{2}{c|}{Decrease in} & \multirow{2}{*}{Point No.} & \multirow{2}{*}{Amount} \\
        \cline{2-3}
           & Point & Degree &   &   \\
        \hline
        
         \multirow{3}{*}{A2-4} & 1 & 1 & 1 & 1 \\
        \cline{2-5}
         & 1 & 2 & 2 , 3 & 2 \\
         \cline{2-5}
          & 0 & -1 & 4 & 1 \\
        \hline

        \multirow{3}{*}{A2-5} & 1 & 1 & 1 & 1 \\
        \cline{2-5}
         & 1 & 2 & 2 , 3 & 2 \\
         \cline{2-5}
          & 0 & -1 & 4 & 1 \\
        \hline

        \multirow{2}{*}{A2-6} & 1 & 1 & 1 , 4 & 2 \\
        \cline{2-5}
         & 1 & 2 & 2 , 3 & 2 \\
         \hline
    \end{tabular}
\end{table}
Denote the change in the intersection line in a new way: 
\begin{center}
    $\left(change\: in \:point\: number\:,\:change\: in\: degree\: number\right)$ $\times$ $amount$
\end{center}
So the change in A2-4, could be denoted as:

$\left(1\:,\:1\right)\times1 \qquad\& \qquad \left(1\:,\:2\right)\times2 \qquad \& \qquad \left(0\:,\:-1\right)\times1$ 

Change in A2-5:

$\left(1\:,\:1\right)\times1 \qquad\& \qquad \left(1\:,\:2\right)\times2 \qquad \& \qquad \left(0\:,\:-1\right)\times1$ 

Change in A2-6:

$\left(1\:,\:1\right)\times2 \qquad\& \qquad \left(1\:,\:2\right)\times2$ 
\begin{figure}[H]
\begin{subfigure}{0.3\textwidth}
 \centering
 \includegraphics[scale=0.45]{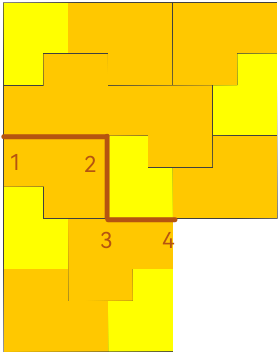}
 \caption{A2-7}  
 \label{A2-7}
 \end{subfigure}
\centering
 \begin{subfigure}{0.3\textwidth}
 \centering
 \includegraphics[scale=0.475]{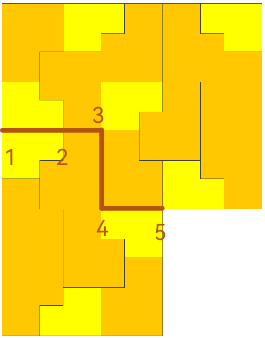}
 \caption{A2-8}    
 \label{A2-8}
 \end{subfigure}
\centering
 \begin{subfigure}{0.3\textwidth}
 \centering
 \includegraphics[scale=0.375]{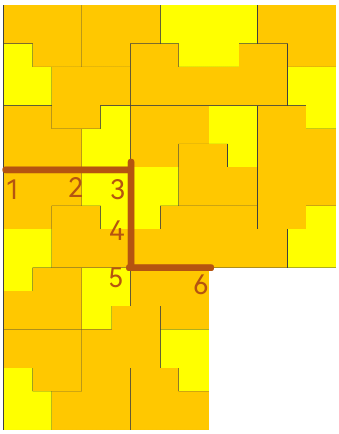}
 \caption{A2-9}    
 \label{A2-9}
 \end{subfigure}
\end{figure}

\begin{table}[H]
    \centering
    \begin{tabular}{|c|c|c|c|c|}
    \hline
        \multirow{2}{*}{Generation} & \multicolumn{2}{c|}{Decrease in} & \multirow{2}{*}{Point No.} & \multirow{2}{*}{Amount} \\
        \cline{2-3}
           & Point & Degree &   &   \\
        \hline
        \multirow{2}{*}{A2-7} & 1 & 1 & 1 , 4 & 2 \\
        \cline{2-5}
         & 1 & 2 & 2 , 3 & \cellcolor{gray!30}2 \\
         \hline

         \multirow{2}{*}{A2-8} & 1 & 1 & 1 , 5 & 2 \\
        \cline{2-5}
         & 1 & 2 & 2 , 3 , 4 & \cellcolor{gray!30}3 \\
         \hline
         
         \multirow{2}{*}{A2-9} & 1 & 1 & 1 , 6 & 2 \\
        \cline{2-5}
         & 1 & 2 & 2 - 5 & \cellcolor{gray!30}4 \\
         \hline
         \end{tabular}
         \end{table}

Change in A2-7:
    
    $\left(1\:,\:1\right)\times2 \qquad\& \qquad \left(1\:,\:2\right)\times2$ 

Change in A2-8:

$\left(1\:,\:1\right)\times2 \qquad\& \qquad \left(1\:,\:2\right)\times3$ 

Change in A2-9:

$\left(1\:,\:1\right)\times2 \qquad\& \qquad \left(1\:,\:2\right)\times4$

A fundamental rule applies here: 

The points at the two ends always experience a decrease of 1 point and a loss of 1 degree, while the points in the middle section always decrease by 1 point and lose 2 degrees. 

Concurrently, the number of middle points from A2-7 to A2-9 is gradually increasing: there are 2 middle points in A2-7, 3 in A2-8, and 4 in A2-9.

\begin{figure}[H]
\centering
 \begin{subfigure}{0.32\textwidth}
 \centering
 \includegraphics[scale=0.23]{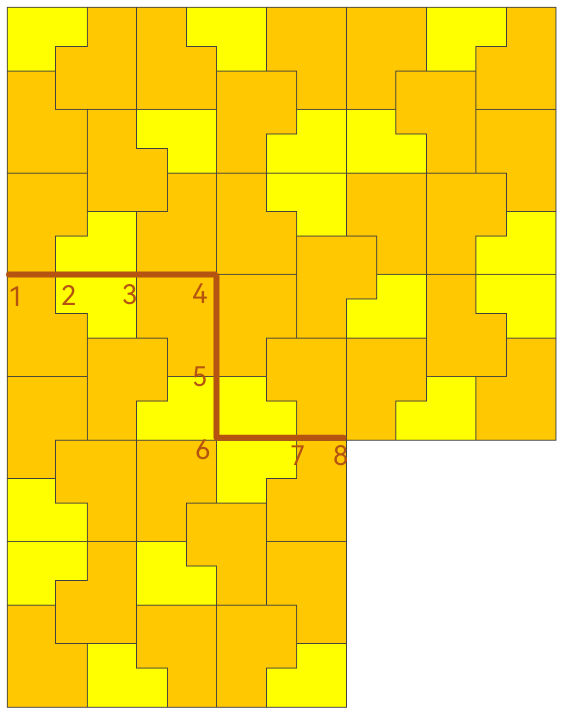}
 \caption{A2-10}  
 \label{A2-10}
 \end{subfigure}
\centering
 \begin{subfigure}{0.32\textwidth}
 \centering
 \includegraphics[scale=0.235]{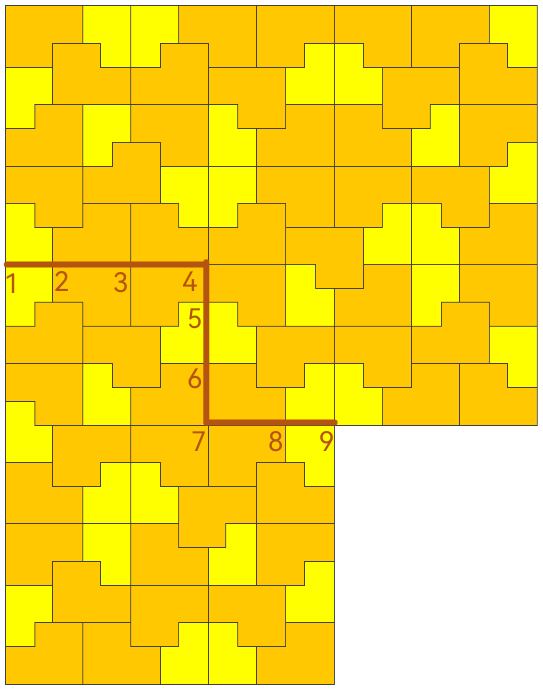}
 \caption{A2-11}    
 \label{A2-11}
 \end{subfigure}
\centering
 \begin{subfigure}{0.25\textwidth}
 \centering
 \includegraphics[scale=0.235]{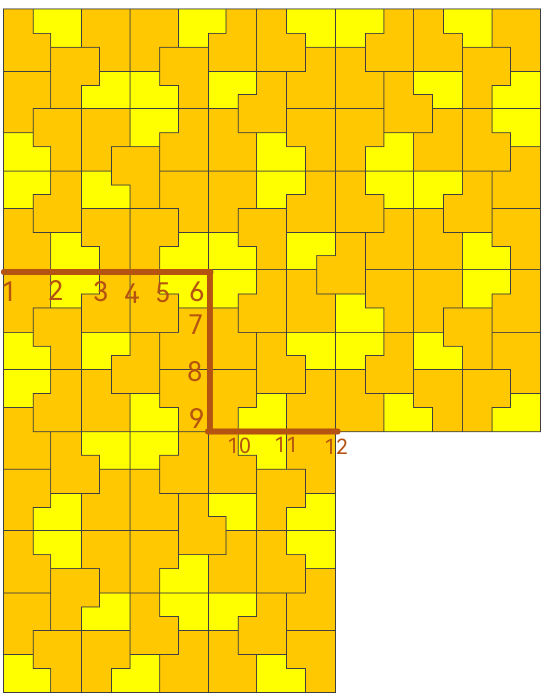}
 \caption{A2-12}    
 \label{A2-12}
 \end{subfigure}
\end{figure}

\begin{table}[H]
    \centering
    \begin{tabular}{|c|c|c|c|c|}
    \hline
        \multirow{2}{*}{Generation} & \multicolumn{2}{c|}{Decrease in} & \multirow{2}{*}{Point No.} & \multirow{2}{*}{Amount} \\
        \cline{2-3}
           & Point & Degree &   &   \\
        \hline

        \multirow{2}{*}{A2-10} & 1 & 1 & 1 , 8 & 2 \\
        \cline{2-5}
         & 1 & 2 & 2 - 7 & \cellcolor{gray!30}6 \\
         \hline

         \multirow{2}{*}{A2-11} & 1 & 1 & 1 , 9 & 2 \\
        \cline{2-5}
         & 1 & 2 & 2 - 8 & \cellcolor{gray!30}7 \\
         \hline

         \multirow{2}{*}{A2-12} & 1 & 1 & 1 , 12 & 2 \\
        \cline{2-5}
         & 1 & 2 & 2 - 11 & \cellcolor{gray!30}10 \\
         \hline

    \end{tabular}
\end{table}

Change in A2-10:
    
    $\left(1\:,\:1\right)\times2 \qquad\& \qquad \left(1\:,\:2\right)\times6$ 

Change in A2-11:

$\left(1\:,\:1\right)\times2 \qquad\& \qquad \left(1\:,\:2\right)\times7$ 

Change in A2-12:

$\left(1\:,\:1\right)\times2 \qquad\& \qquad \left(1\:,\:2\right)\times10$ 

The change of the two ends are still $\left(1\:,\:1\right)$ and in the middle points are also $\left(1\:,\:2\right)$. The amount of middle points from A2-10 to A2-12 are 6, 7, 10.

Starting from A2-7, there are only two types of changes occurring along the intersection line:
\begin{itemize}
    \item The changes at the two ends: At each end, there is a reduction of 1 point and 1 degree, and this pattern occurs twice $\left(once\: at \:each\: end\right)$.
    \item The changes in the middle part: There is a decrease of 1 point and 2 degrees. The number of such occurrences varies with each generation.
\end{itemize}

To discern the underlying rule, consider the number of middle points of A2-7 as the first term in the following sequence.
\begin{table}[h]
    \resizebox{\textwidth}{!}{
    \begin{tabular}{|c|c|c|c|c|c|c|c|c|c|c|}
    \hline
    A2-7 & A2-8 & A2-9 & A2-10 & A2-11 & A2-12 & A2-13 & A2-14 & A2-15 & A2-16 & A2-17 \\
    \hline
        $D_1$ & $D_2$ & $D_3$ & $D_4$ & $D_5$ & $D_6$ & $D_7$ & $D_8$ & $D_9$ & $D_{10}$ & $D_{11}$  \\
        \hline
        2 & 3 & 4 & 6 & 7 & 10 & 12 & 17 & 20 & 28 & 33 \\
        \hline
    \end{tabular}}
    \caption{Relation between generation and amount of middle points}
\end{table}

We calculate the difference in the number of middle points between consecutive generations, expressed as $\left( D_{N+1} - D_N \right)$, where $D_N$ represents the number of middle points in A2-$(N+6)$.
\begin{table}[h]
    \centering
    \begin{tabular}{|c|c|c|c|c|c|c|c|c|c|}
    \hline
        1 & 1 & 2 & 1 & 3 & 2 & 5 & 3 & 8 & 5 \\
        \hline
    \end{tabular}
    \caption{Differences of middle point amount between two generations}
    \label{tab:my_label2}
\end{table}
Divide this sequence of differences into two sequences $\{a_n\}$ and $\{b_n\}$ with $a_n=D_{2n}-D_{2n-1}$ and $b_n=D_{2n+1}-D_{2n}$.
\begin{table}[h]
    \centering
    \begin{tabular}{|c|c|c|c|c|c|c|c|c|c|}
    \hline
      \cellcolor{blue!20}$a_1$ & \cellcolor{purple!20}$b_1$ & \cellcolor{blue!20}$a_2$ & \cellcolor{purple!20}$b_2$ & 
      \cellcolor{blue!20}$a_3$ & \cellcolor{purple!20}$b_3$ & 
      \cellcolor{blue!20}$a_4$ & \cellcolor{purple!20}$b_4$ & 
      \cellcolor{blue!20}$a_5$ & \cellcolor{purple!20}$b_5$  \\
      \hline
      \cellcolor{blue!20}1 & \cellcolor{purple!20}1 & 
      \cellcolor{blue!20}2 & \cellcolor{purple!20}1 & 
      \cellcolor{blue!20}3 & \cellcolor{purple!20}2 & 
      \cellcolor{blue!20}5 & \cellcolor{purple!20}3 & 
      \cellcolor{blue!20}8 & \cellcolor{purple!20}5 \\
      \hline
    \end{tabular}
    \caption{The first few terms of the $\{a_n\}$ and $\{b_n\}$ sequences}
\end{table}

We observe that $\{a_n\}$ and $\{b_n\}$ are both the Fibonacci sequence. $\{a_n\}$ is the Fibonacci sequence from the 2nd term and $\{b_n\}$ is the Fibonacci sequence from the 1st term. We will proceed to prove this observation, but we need a lemma first:
\begin{lemma}\label{reflection}
For the A2-$k$ tiling with $k\geq 9$, any tile $T$ with one of its sides on the intersection line has a dual tile $T'$ that is a reflection of $T$ through that side. If $T$ has two sides on the intersection line it has two dual tiles $T'$ and $T''$, which are reflections of $T$ through those two sides respectively. 
\end{lemma}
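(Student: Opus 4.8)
The plan is to prove the lemma by induction on $k$, exploiting the recursive way in which A2-$k$ is assembled from its two predecessors. For the base of the induction I would verify the reflection property directly for the first relevant generations, A2-9 and A2-10, reading the tile configuration along the intersection line off Figures \ref{A2-9} and \ref{A2-10}; since only finitely many tiles abut the seam, this is a finite check, and it also pins down why the threshold $k\geq 9$ is needed (for smaller $k$ the local pattern along the seam has not yet stabilized, exactly as reflected in the earlier discussion that the changes settle into their steady form only from A2-7 onward).

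For the inductive step I would analyze the seam of A2-$k$ directly. Recall that A2-$k$ is obtained by gluing a transformed copy of A2-$(k-1)$ to a transformed copy of A2-$(k-2)$ along the intersection line $\ell_k$, and that A2-$(k-1)$ itself decomposes as A2-$(k-2)$ together with A2-$(k-3)$. The key structural observation I would establish is that the A2-$(k-2)$ constituent of A2-$(k-1)$ is precisely the one lying against $\ell_k$, so that in a neighborhood of the seam the tiling A2-$k$ presents a copy of A2-$(k-2)$ on each side of $\ell_k$, and that the gluing transformations are arranged so these two copies are exact mirror images across $\ell_k$. Granting this, the reflection property for every tile with a side on $\ell_k$ is immediate; the case of a tile with two sides on $\ell_k$ (an $L$-hexagon meeting the seam along two of its edges) follows the same way, once one checks that both reflected images land on genuine tiles of the opposite copy rather than straddling its boundary.

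The technical heart, and the main obstacle, is showing that the Golden Bee substitution of Figure \ref{Golden Bee} is compatible with the seam reflection: that reflecting the boundary tiles adjacent to the lower-generation seam and then substituting yields the same local pattern as substituting first and then reflecting across $\ell_k$. Because the substitution rule is fixed and acts identically on every tile, this reduces to a finite case analysis of how a large hexagon and a small hexagon sitting on the seam subdivide, combined with a bookkeeping argument that the uniform scaling by $1/\Psi$ and the placement of the new hexagons respect the mirror across $\ell_k$. I expect the delicacy to lie entirely here—in confirming that no tile produced by the substitution crosses $\ell_k$ and that the mirror pairing is preserved exactly—since it is this compatibility that propagates the finite base case into the full statement for all $k\geq 9$.
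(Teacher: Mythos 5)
Your core strategy is the same as the paper's: verify the mirror-pair property along the intersection line for a small generation by direct inspection (the paper uses only A2-9, via Figure \ref{A2-9}), and then propagate it by observing that the Golden Bee substitution commutes with reflection, so that substituting a mirror-symmetric configuration of seam-adjacent tiles yields a mirror-symmetric configuration along the image seam. The paper's entire proof is essentially your final paragraph, stated in two sentences.

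One caution: the intermediate ``key structural observation'' you insert --- that a whole copy of A2-$(k-2)$ sits on each side of $\ell_k$ and that these two copies are exact mirror images across the seam --- is both unnecessary and substantially stronger than the lemma. The lemma only asserts a one-tile-thick mirror pairing (and only for tiles with a full side, not merely a corner, on the seam), and that is all your base-case inspection of A2-9 actually certifies; a finite check of the tiles abutting the seam does not give you mirror symmetry of entire subregions, so your induction hypothesis would not be strong enough to propagate the thicker claim, and it is doubtful the thicker claim even holds. You should run the induction directly on the statement of the lemma: if every seam-adjacent tile of A2-$k$ has a reflected partner through its seam side, then, since the substitution acts identically on a tile and on its mirror image and the seam of A2-$(k+1)$ is the substitution image of the seam of A2-$k$, every seam-adjacent tile of A2-$(k+1)$ again has a reflected partner. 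With that correction your argument coincides with the paper's.
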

\textbf{Remark.} Note that $T$ has to have a full side (with positive length) on the intersection line, if $T$ only has a corner point on the intersection line this lemma does not apply.

\begin{proof}
The A2-9 tiling is small enough that we can verify this lemma is true for it by inspection of Figure \ref{A2-9}. But then to show our lemma is true for A2-$k$ for $k\geq 10$, we simply observe that the tile substitution algorithm will create reflection symmetric tiles on both sides of an axis of reflection if the original tiles are reflection symmetric about that axis of reflection.
\end{proof}
We are now ready to prove that the $\{a_k\}$ and $\{b_k\}$ are both Fibonacci sequences.
\begin{lemma}
    For $k\geq 1$, $a_k=F(k+1)$ and $b_k=F(k)$, where $F(k)$ is the $k$th Fibonacci number.
\end{lemma}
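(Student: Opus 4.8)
The plan is to prove the two identities simultaneously by induction on $k$, reducing them to the coupled pair of recursions $a_k = a_{k-1}+b_{k-1}$ and $b_k = a_{k-1}$. Once these are established, substituting the second into the first gives $a_k = a_{k-1}+a_{k-2}$, the Fibonacci recursion, while $b_k=a_{k-1}$ then shifts the index by one; together with the base values $a_1=b_1=1$ read off from the difference tables above, this forces $a_k=F(k+1)$ and $b_k=F(k)$. So the whole task is to supply a geometric justification of the two recursions.

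The engine driving them is the self-similarity of the intersection line. Writing $I_n$ for the common boundary along which the A2-$(n-1)$ and A2-$(n-2)$ pieces are glued to form A2-$n$, I would first note that applying the substitution to the decomposition $\text{A2-}n=\text{A2-}(n-1)\cup\text{A2-}(n-2)$ produces $\text{A2-}(n+1)=\text{A2-}n\cup\text{A2-}(n-1)$, and that under this substitution $I_n$ is carried exactly onto $I_{n+1}$. Here Lemma \ref{reflection} does the essential work: since every tile meeting $I_n$ along a full side has a mirror dual across $I_n$, the substitution refines the two sides of $I_n$ in mirror-image fashion, so $I_n$ remains a straight fault line and its sequence of tile-edges is subdivided in a well-defined way. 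In other words, the edge-word along the intersection line evolves by a fixed substitution $\sigma$, with $I_{n+1}=\sigma(I_n)$.

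Next I would identify $\sigma$ on the two possible edge types, namely a side inherited from a large hexagon and a side inherited from a small hexagon. Using the length relations $\Psi^{n+4}+\Psi^{n+2}=\Psi^n$ together with the single-tile substitution of Figure \ref{Golden Bee}, each large-hexagon side subdivides into one large side and one small side, while each small-hexagon side becomes a single large side; this is exactly the Fibonacci substitution, whose incidence matrix is $\left(\begin{smallmatrix}1&1\\1&0\end{smallmatrix}\right)$. Tracking the vertices newly created in the interior of the intersection line at each step and classifying them by the parity of the generation --- which records which hexagon size the freshly attached piece contributes, and which is also the origin of the even/odd dichotomy in Theorem \ref{A2theorem} --- produces precisely the two interleaved counts $a_k$ and $b_k$, and the incidence relations of $\sigma$ then read off as $a_k=a_{k-1}+b_{k-1}$ and $b_k=a_{k-1}$.

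The main obstacle, and the place where care is genuinely needed, is the bookkeeping at the two \emph{endpoints} of $I_n$ rather than in its interior. A generic middle vertex sits where a tile and its mirror dual meet and accounts for the $(1,2)$ change recorded above, but the two extreme vertices behave differently (the $(1,1)$ change), and under substitution an endpoint could in principle spawn an extra interior vertex or merge with its neighbor. Lemma \ref{reflection} again controls this, since it fails only when a tile touches $I_n$ at a single corner; I must therefore check that the endpoints of $I_n$ are never of this degenerate type throughout the inductive range, so that the interior count transforms cleanly by $\sigma$. Finally, because Lemma \ref{reflection} is guaranteed only for $k\geq 9$, the small cases $a_1,a_2,b_1,b_2$ should be taken directly from the difference tables and from Figures \ref{A2-7}, \ref{A2-9}, and \ref{A2-11}, with the induction invoked only once every generation entering the recursion is at least $9$; the finitely many transitional cases are then confirmed by inspection against the tabulated data.
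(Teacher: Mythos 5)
Your overall strategy --- treat the intersection line as a one-dimensional substitution system, use Lemma \ref{reflection} to guarantee that it refines coherently, and read the recursions for $a_k$ and $b_k$ off the incidence structure --- is in the spirit of the paper's proof, and the coupled recursions $a_k=a_{k-1}+b_{k-1}$, $b_k=a_{k-1}$ are numerically consistent with the tabulated data. However, the specific substitution $\sigma$ you propose is wrong, and this is a genuine gap rather than a presentational one. You claim that under a single substitution step each side on the intersection line is either subdivided into two sides or replaced by one side; that would force the number of sides of $I_n$ to grow by a factor of $\Phi\approx 1.618$ per generation. The actual side counts along the intersection line are $3,4,5,7,8,11,13,18,21,29,34,\dots$ for A2-$7$ through A2-$17$, growing by roughly $\sqrt{\Phi}\approx 1.27$ per generation. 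The reason is that a side of an A2 hexagon is typically \emph{not} subdivided at the next substitution step: only the longest side of a tile acquires a new vertex immediately, and the other sides must wait several further generations. The paper handles this by labelling each side with the number of substitution steps remaining until a new vertex appears on it (Figure \ref{sidenumbers}); on the intersection line only labels $1,2,3,4$ occur, and the induced substitution is $1\to 2\,4$, $2\to 1$, $3\to 2$, $4\to 3$ --- a four-letter system with a built-in delay, not the two-letter Fibonacci substitution. Your two-type (large side/small side) classification cannot encode this delay, so the incidence relations you ``read off'' do not actually follow from your $\sigma$.

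A second, smaller issue: even granting a correct substitution, the identification of $a_k$ and $b_k$ with letter counts is not the one you describe. Since $a_k=D_{2k}-D_{2k-1}$ and $b_k=D_{2k+1}-D_{2k}$ count vertices created in two \emph{consecutive single} generation steps, each equals the number of label-$1$ sides present just before the corresponding step; the relation $b_k=a_{k-1}$ compares counts three generations apart and does not drop out of any incidence matrix --- it happens to hold because both sequences satisfy the Fibonacci recursion with matching initial terms. The paper avoids this by proving the uncoupled recursions $a_k=a_{k-1}+a_{k-2}$ and $b_k=b_{k-1}+b_{k-2}$ directly: every `$1$' in $L_m$ comes from a `$2$' in $L_{m-1}$, which comes either from a `$1$' in $L_{m-2}$ or (via $4\to 3\to 2$) from a `$1$' in $L_{m-4}$, giving a clean two-generations-back accounting. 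Your endpoint bookkeeping and base-case checks are sensible, but the core combinatorial engine needs to be replaced.
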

\begin{proof}
First, we observe from Figure \ref{A2-3} that in the orange (large) tile, the only side where a new point appears after one substitution step is the longest side. From this observation, in Figure \ref{sidenumbers} we label each side of both the orange and yellow tiles with a number, indicating how many rounds of substitution must occur for a new point to appear on that side. 

\begin{figure}[H]
\centering
\includegraphics[scale = 0.7]{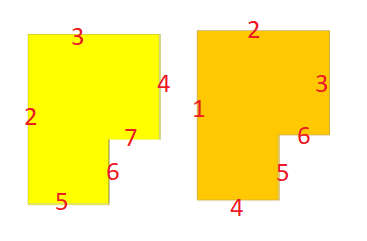}
\caption{The numbers labeled on each sides of the tile represent how many substitution steps have to occur before a new point appears on that side.}
\label{sidenumbers}
\end{figure}

We then observe in Figure \ref{A2-7} that the intersection line there is composed of three sides, which have labels $1,2,3$ according to Figure \ref{sidenumbers}. Similarly, the intersection line in Figure \ref{A2-8}
is composed of four tile sides, with labels $2,4,1,2$. A tile side labeled $1$ gets split into two tile sides, with labels $2$ and $4$ 

By definition, after one substitution step tile sides with labels $2,3,4,5,6$ or $7$ get their labels reduced by $1$. A tile side with label $1$ splits into two tile sides, with labels $4$ and $2$.

Let $L_k$ for $k\geq 7$ represent an unordered list of numbers $1,2,3$ or $4$, which are the labels of the sides on the intersection line of the A2-$k$ tiling. Thus $L_7=(1,2,3)$ and $L_8=(2,4,1,2)$. To obtain $L_{k+1}$, we start with $L_k$ and replace $4\to 3$, replace $3\to 2$, replace $2\to 1$ and replace $1\to 2,4$. Let $|L_k|$ represent the length of the list $L_k$, so $|L_7|=3$ and $|L_8|=4$. It is clear that $D_k=|L_{k+6}|-1$, since the number of vertices on the intersection line is $1$ plus the number of sides of the intersection line, and $D_k$ counts the number of vertices in the intersection line other than the first and the last one. This implies that $a_k=D_{2k}-D_{2k-1}=|L_{2k+6}|-|L_{2k+5}|$, and $b_k=D_{2k+1}-D_{2k}=|L_{2k+7}|-|L_{2k+6}|$.

It is clear that for any $k\geq 7$, $|L_{k+1}|-|L_k|$ is equal to the number of `$1$'s in the list $L_k$ (since this is the only way a new side can be created). Since a side labeled $1$ is replaced by two sides of labels $2$ and $4$ according to the substitution rule, if $k\geq 8$ each `$1$' in $L_k$ arises from a `$2$' in $L_{k-1}$, which (if $k\geq 11$) itself arises from either a `$1$' in $L_{k-2}$  or a `$1$' in $L_{k-4}$.

This line of reasoning implies that if $k\geq 3$,
\begin{align}
a_k=& |L_{2k+6}|-|L_{2k+5}|\nonumber\\
=& (\text{number of `$1$'s in $L_{2k+5}$})\nonumber\\
=& (\text{number of `$1$'s in $L_{2k+3}$})+(\text{number of `$1$'s in $L_{2k+1}$})\nonumber\\
=&a_{k-1}+a_{k-2}.
\end{align}
We can similarly show that for $k\geq 3$, $b_{k}=b_{k-1}+b_{k-2}$. 

In other words, both $\{a_k\}$ and $\{b_k\}$ obey the Fibonacci recursion. The sequence $\{b_k\}$ has initial conditions $b_1=b_2=1$, and $\{a_k\}$ has initial conditions $a_1=1$, $a_2=2$. This concludes our proof.
\end{proof}
We are now able to write a formula for $D_k$ based on the Fibonacci numbers:
\begin{lemma}\label{DkFormula}
For $k\geq 3$, $D_k=-1+F(\lfloor k/2\rfloor+3)+ F(\lfloor (k-1)/2\rfloor+2)$.
\end{lemma}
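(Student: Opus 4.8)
The plan is to recover $D_k$ from its increments. Since $D_k$ is a sequence of integers, it is determined by a starting value together with the telescoping sum of consecutive differences $D_{j+1}-D_j$, and the previous lemma tells us exactly what those differences are: they split by parity of the index into the two interleaved subsequences $a_n=D_{2n}-D_{2n-1}=F(n+1)$ and $b_n=D_{2n+1}-D_{2n}=F(n)$. Consequently the entire computation reduces to evaluating partial sums of Fibonacci numbers, and the parity split is the structural feature that forces the two cases ($k$ even, $k$ odd) in the statement.

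First I would pin down the base value of $D_k$ at the smallest index covered by the increment lemma, and then write, for the odd index, $D_{2n+1}=D_{\mathrm{base}}+\sum_{j=1}^{n}a_j+\sum_{j=1}^{n}b_j$, and for the even index, $D_{2n}=D_{\mathrm{base}}+\sum_{j=1}^{n}a_j+\sum_{j=1}^{n-1}b_j$. Substituting $a_j=F(j+1)$ and $b_j=F(j)$ converts each bracket into a sum of consecutive Fibonacci numbers, which I would collapse with the standard identities $\sum_{j=1}^{m}F(j)=F(m+2)-1$ and its reindexed form $\sum_{j=1}^{m}F(j+1)=F(m+3)-2$. After this step each parity case is reduced to a short combination of two Fibonacci numbers minus a constant.

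The remaining work is to rewrite $n$ in terms of $k$ through $n=\lfloor k/2\rfloor$, merge or shift the two Fibonacci terms using the recurrence $F(m)+F(m+1)=F(m+2)$ where it helps, and check that the resulting even and odd expressions fuse into the single floor-based closed form claimed in the statement. I expect the main obstacle to be entirely bookkeeping rather than conceptual: keeping the index shifts of the two Fibonacci sums aligned across the parity split, and making sure the base value is inserted at the correct term so that the formula is exact and not merely correct up to an additive constant. To guard against off-by-one errors in the Fibonacci indices, I would finish by evaluating the candidate closed form at several small values of $k$ and comparing against the tabulated $D_k$, adjusting the index offsets until every entry matches.
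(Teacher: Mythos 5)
Your proposal is correct and follows essentially the same route as the paper: telescope $D_k$ from the base value $D_1=2$ using the increments $a_j=F(j+1)$ and $b_i=F(i)$, collapse the two sums with $\sum_{j=1}^{m}F(j)=F(m+2)-1$ (and its shifted form), and unify the parity cases via the indices $\lfloor k/2\rfloor$ and $\lfloor (k-1)/2\rfloor$. Your closing sanity check against the tabulated $D_k$ is genuinely worth carrying out, because the telescoping actually yields $D_k=-1+F(\lfloor k/2\rfloor+3)+F(\lfloor (k-1)/2\rfloor+2)$ (matching $D_3=4$, $D_4=6$, $D_7=12$), so the Fibonacci indices as printed in the lemma statement are off by one and your proposed adjustment step would catch exactly this.
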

\begin{proof}
Recall the following formula for the sum of the first $n$ Fibonacci numbers (found in, for instance \cite{hardywright1979})
\begin{equation}
    \sum_{k=1}^n F(k)=F(n+2)-1.
\end{equation}
We know that $D_1=2$, $D_2=3$ and for $k\geq 3$ ,
\begin{align*}
D_k=&D_1+\sum_{j=1}^{\lfloor k/2\rfloor} a_j+\sum_{i=1}^{\lfloor (k-1)/2\rfloor} b_i\\
=& D_1+ \sum_{j=1}^{\lfloor k/2\rfloor} F(j+1)+\sum_{i=1}^{\lfloor (k-1)/2\rfloor} F(i)\\
=& 2+ ((F(\lfloor k/2\rfloor+3)-1-1)+(F(\lfloor (k-1)/2\rfloor+2)-1)\\
=&-1+F(\lfloor k/2\rfloor+3)+ F(\lfloor (k-1)/2\rfloor+2)
\end{align*}
\end{proof}
\begin{proof}[Proof of Theorem \ref{A2theorem}]
The A2-$k$+$2$ tiling is generated by pasting the A2-$k$+$1$ tiling with the A2-$k$ tiling. If $k\geq 7$, by Lemma \ref{reflection} we know that $2+D_{k-4}$ of the points from the A2-$k$+$1$ tiling and the A2-$k$ will overlap on the intersection line of the A2-$k$+$2$ tiling. This implies that for $k\geq 7$

\begin{equation}
V(k+2)=V(k+1)+V(k)-2-D_{k-4}.
\end{equation}
Using Lemma \ref{DkFormula}, we know this is equivalent to
\begin{equation}\label{Vequation}
V(k+2)=V(k+1)+V(k)-1-F(\lfloor (k-4)/2\rfloor+3)- F(\lfloor (k-4-1)/2\rfloor+2).
\end{equation}

This is a second order non-homogeneous difference equation, and we can apply standard methods to find the general solution. It is not hard to verify the following is a particular solution of  \eqref{Vequation}:

\begin{equation}\label{Vparticular}
V_p(k)=\begin{cases}
1+\frac{F\left(\frac{k}{2}\right)}{2}+F\left(\frac{k}{2}+1\right) +2F\left(\frac{k}{2}-1\right)-\frac{F\left(\frac{k}{2}-3\right)}{2}, & \text{ $k$ even}\\
1+2F\left(\frac{k+1}{2}\right)+F\left(\frac{k-1}{2}\right), & \text{ $k$ odd}
\end{cases}
\end{equation}
We now find the complementary solution. This is straightforward, because the homogeneous part of \eqref{Vequation} is just the Fibonacci recursion equation. We thus have the complementary solution

\begin{equation} \label{Vcomplementary} V_c(k)= C_1 \left(\frac{1+\sqrt 5}{2}\right)^{k-7}+ C_2 \left(\frac{1-\sqrt 5}{2}\right)^{k-7},
\end{equation}
for constants $C_1$ and $C_2$. Given \eqref{Vparticular} and \eqref{Vcomplementary} with initial conditions $V(7)=40, V(8)=61$ (obtained from counting the vertices in Figures \ref{A2-7} and \ref{A2-8}), we find the solution of \eqref{Vequation} given in \eqref{Vformula}.

Now we consider $T(k)$. Recall that $T(k)$ refers to the sum of the degrees of all vertices in the graph corresponding to A2-$k$. Again, we use the fact that the A2-$k$+$2$ tiling is generated by pasting the A2-$k$+$1$ tiling with the A2-$k$ tiling.

In this pasting process, the only vertices of A2-$k$ and A2-$k$+$1$ that undergo changes in their degree are the ones on the intersection line. Assume that $k\geq 7$.  We note that all edges are either horizontal or vertical. Together with Lemma \ref{reflection} this implies all vertices on the intersection line have degree three or four. The first vertex on the intersection line has degree three, while the other $1+D_{k+2}$ vertices in the intersection line each have degree four. Those first and last points on the intersection line of A2-$k+2$ arise from four  points in A2-$k$ and A2-$k+1$ (two in A2-$k$ and two in A2-$k+1$) with three of them of degree two and one of them of degree three. The three vertices of degree two appear in two corners of A2-$k$ and one corner of A2-$k+1$, while the one vertex of degree three appears in the interior of one of the sides of A2-$k+1$, and we know the degree there has to be three due to Lemma \ref{reflection}.

Thus before the pasting, the four corner points had degrees that sum to 9, after the pasting the first and last points on the intersection line have degrees that sum to 7. Thus the pasting process results in a loss of $2=9-7$ degrees in total from the first and last vertices on the intersection line.

We now consider the $D_{k-4}$ middle vertices on the intersection line of A2-$k$+$2$. Before the pasting, each of these $D_{k-4}$ middle vertices arise from a vertex from A2-$k$ and a vertex from A2-$k$+$1$.

From Lemma \ref{reflection}, we can see that after pasting each of the $D_{k-4}$ middle vertices must have degree exactly $4$. Before pasting, the middle vertex corresponds to either two vertices of degree three each in A2-$k$ and A2-$k$+$1$ (this occurs when the vertex is not in a corner of  A2-$k$ or A2-$k$+$1$) or one vertex of degree $2$ and one vertex of degree $4$ in A2-$k$ and A2-$k$+$1$ (this occurs when the vertex appears in a corner of A2-$k$ or A2-$k$+$1$).

Thus the pasting process results in a loss of $2D_{k-4}$ degrees from the middle vertices on the intersection line.

From this we can derive a recursion equation for $T(k)$ when $k\geq 7$:
\begin{equation}
T(k+2)=T(k+1)+T(k)-2-2D_{k-4}
\end{equation}

Using Lemma \ref{DkFormula}, we know this is equivalent to
\begin{equation}\label{Tequation}
T(k+2)=T(k+1)+T(k)-2F(\lfloor (k-4)/2\rfloor+3)- 2F(\lfloor (k-4-1)/2\rfloor+2).
\end{equation}

Again, this is a second order non-homogeneous difference equation. We can check that the following is a particular solution of  \eqref{Tequation}:

\begin{equation}\label{Tparticular}
T_p(k)=\begin{cases}
F\left(\frac{k}{2}\right)+2F\left(\frac{k}{2}+1\right) +4F\left(\frac{k}{2}-1\right)-F\left(\frac{k}{2}-3\right), & \text{ $k$ even}\\
4F\left(\frac{k+1}{2}\right)+2F\left(\frac{k-1}{2}\right), & \text{ $k$ odd}
\end{cases}
\end{equation}
The homogeneous parts of \eqref{Vequation} and \eqref{Tequation} are the same, so the complementary solution of \eqref{Tequation} is just \eqref{Vcomplementary}. Using the initial conditions $T(7)=104$, $T(8)=162$ from counting the degrees of vertices in Figures \ref{A2-7} and \ref{A2-8}, we get the solution of $T(k)$ in \eqref{Tformula}. 

It remains to demonstrate the calculation of average degree for the limiting graph as $k\to\infty$. Notice that by the closed form formula for the Fibonacci number
\begin{equation}\label{FibonacciDefinition}
    F(n) = \dfrac{1}{\sqrt{5}} \cdot \left(\left(\dfrac{1+\sqrt{5}}{2}\right)^n - \left(\frac{1-\sqrt{5}}{2}\right)^n \right),
\end{equation}
 both $V(k)$ and $T(k)$ are linear combinations of powers of $\left( \frac{1\pm \sqrt 5}{2}\right)$. For large $k$, the largest of these terms will be 
\begin{equation}
\left( \frac{31}{2}+\frac{69}{2\sqrt 5}\right)\left( \frac{1+ \sqrt 5}{2}\right)^{k-7}
\end{equation}
for $V(k)$ and
\begin{equation}
\left( 44+\frac{98}{\sqrt 5}\right)\left( \frac{1+ \sqrt 5}{2}\right)^{k-7}
\end{equation}
for $T(k)$. 

We then have
\begin{equation}
\lim_{k\to\infty} \frac{T(k)}{V(k)}=
\frac{ 44+\frac{98}{\sqrt 5}}{ \frac{31}{2}+\frac{69}{2\sqrt 5}} =\frac{2}{11}\left(14+\phi\right),
\end{equation}
as desired. 
\end{proof}

\section{Numerical Calculations}
Through the calculations of equation \eqref{Vformula} and equation \eqref{Tformula}, we can obtain the total number of vertices and sum of the individual degrees of all vertices by increasing the value of $k$, as summarized in Table \ref{tab:my_label}.
\begin{table}[H]
    \centering
    \begin{tabular}{|c|c|c|}
       \hline
        Generation, A2-$k$ & Total Number of Vertices & 2 $\times$ \text{Total Number of Edges} \\
           \hline
           A2-7  & 40 & 104 = 2 $\times$ 52 \\
           \hline
           A2-8  & 61 & 162 = 2 $\times$ 81\\
           \hline
           A2-9  & 95 & 256 = 2 $\times$ 128 \\
           \hline
           A2-10  & 148 & 404 = 2 $\times$ 202 \\
           \hline
           A2-11  & 234 & 644 = 2 $\times$ 322 \\
           \hline
           A2-12  & 370 & 1026 = 2 $\times$ 513 \\
           \hline
           A2-13  & 590 & 1644 = 2 $\times$ 822 \\
           \hline
           A2-14  & 941 & 2634 = 2 $\times$ 1317 \\
           \hline
    \end{tabular}
    \caption{Data for A2-7 through A2-14}
    \label{tab:my_label}
\end{table}
\textbf{Remark:} The difference value is $\left|L - d(k)\right|$, the absolute error between the theoretical limit and the average degree.

We extended $k$ to larger values to verify the correctness of the limit value. The corresponding results are presented in Table \ref{tab:larger_k_values}.
\begin{table}[H]
    \centering
    \begin{tabular}{|c|c|c|c|c|}
        \hline
        A2-$k$ & $V(k)$ & $T(k)$ & Average Degree & Difference \\
        \hline
        15 & 1509 & 4236 & 2.807157057654075 & $3.25 \times 10^{-2}$ \\
        \hline
        20 & 16293 & 46114 & 2.830295218805622 & $9.35 \times 10^{-3}$ \\
        \hline
        30 & 1983866 & 5631810 & 2.838805645139339 & $8.37 \times 10^{-4}$ \\
        \hline
        40 & 243778372 & 692225052 & 2.839567129441655 & $7.54 \times 10^{-5}$ \\
        \hline
        50 & 29980307813 & 85133153674 & 2.839635743735918 & $6.80 \times 10^{-6}$ \\
        \hline
        100 & $8.437515 \times 10^{20}$ & $2.395953 \times 10^{21}$ & 2.839642543368540 & $4.05 \times 10^{-11}$ \\
        \hline
        200 & $6.683110 \times 10^{41}$ & $1.897764 \times 10^{42}$ & 2.839642543409072 & $1.44 \times 10^{-21}$ \\
        \hline
        300 & $5.293497 \times 10^{62}$ & $1.503164 \times 10^{63}$ & 2.839642543409072 & $5.12 \times 10^{-32}$ \\
        \hline
        500 & $3.321014 \times 10^{104}$ & $9.430492 \times 10^{104}$ & 2.839642543409072 & $6.46 \times 10^{-53}$ \\
        \hline
        700 & $2.083525 \times 10^{146}$ & $5.916466 \times 10^{146}$ & 2.839642543409072 & $8.16 \times 10^{-74}$ \\
        \hline
        1000 & $1.035359 \times 10^{209}$ & $2.940048 \times 10^{209}$ & 2.839642543409072 & $3.66 \times 10^{-105}$ \\
        \hline
    \end{tabular}
    \caption{Computed values of $V(k)$, $T(k)$, average degree, and difference for various $k$ values}
    \label{tab:larger_k_values}
\end{table}
The characterization of the exponential convergence rate \( \beta \) requires precise computation of the discrepancy \( |L - \overline{d}(k)| \), which for large \( k \) becomes too small for conventional double-precision arithmetic.  Our analysis therefore employs symbolic computation via the \texttt{SymPy} library. The computational procedure is structured as follows:
\begin{enumerate}
    \item \textbf{Exact Integer Computation:} The sequences for  \( V(k) \) and \( T(k) \) are generated through their respective recurrence relations using exact integer arithmetic.
    \item \textbf{Rational Representation of Average Degree:} The average degree \( \overline{d}(k) \) is computed exactly as a rational number, \( \overline{d}(k) = T(k)/V(k) \), thereby precluding any intermediate floating-point rounding errors.
    \item \textbf{Symbolic Evaluation of the Discrepancy:} The theoretical limit \( L \) is defined symbolically as \( L = (29 + \sqrt{5})/11 \). The absolute discrepancy \( |L - \overline{d}(k)| \) is then computed through an exact symbolic subtraction of the rational number \( \overline{d}(k) \) from \( L \). This algebraic manipulation, performed by \texttt{SymPy}, preserves full precision.
    \item \textbf{High-Precision Numerical Evaluation:} The resultant exact symbolic expression for the discrepancy is subsequently evaluated to a sufficiently high decimal precision. These high-fidelity values are used to compute the logarithmic discrepancy \( \log_{10}|L - \overline{d}(k)| \) plotted in Figure~\ref{Convergence Plot}.
\end{enumerate}

This approach allows us to reliably capture even arbitrarily small differences in asymptotic behavior, overcoming the limitations of standard double-precision arithmetic.

We computed 50 data points for generations $k$ ranging from 10 to 500 in steps of 10. The figure \ref{Convergence Plot} illustrates the logarithmic discrepancy between the average degree $\overline{d}(k)$ and theoretical limit $L$ across iteration counts $k$. Linear regression analysis yields the following statistically significant results:
\begin{itemize}
    \item Slope estimate $\hat{\beta}_1 = -0.104501 \pm 0.000004$ (95\% confidence interval)
    \item Intercept estimate $\hat{\beta}_0 = 0.059770$
    \item Correlation coefficient $r = -1.000000$, indicating perfect negative correlation
    \item Coefficient of determination $R^2 = 1.000000$, demonstrating perfect model fit
    \item Statistical significance $p = 7.235063 \times 10^{-174}$, far below the 0.05 significance threshold
\end{itemize}
These results confirm a perfect linear negative relationship between the logarithmic discrepancy and iteration count. The asymptotic behavior exhibits precise concordance with the theoretical derivation $L = \frac{29 + \sqrt{5}}{11}=\frac{2}{11}\left(14+\phi\right)$, providing numerical validation of the convergence of the average degree.

\begin{figure}[H]
    \centering
    \includegraphics[scale = 0.28]{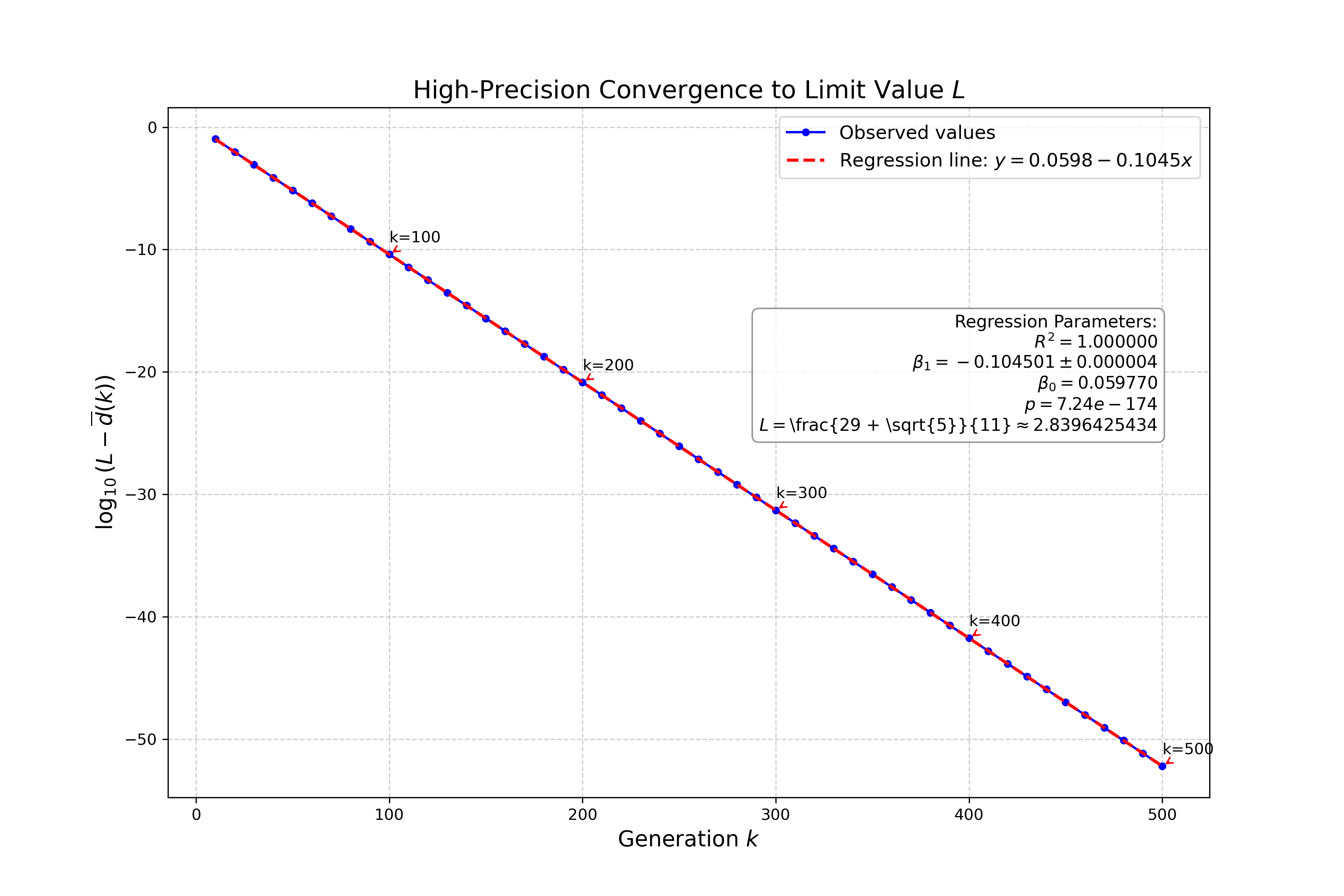}
    \caption{Convergence Plot}
    \label{Convergence Plot}
\end{figure}


\medskip
\noindent MSC2020: 05C10, 05B45

\end{document}